\theoremstyle{definition}
\newtheorem{definition}{Definition}[section]
\theoremstyle{remark}
\theoremstyle{plain}
\newtheorem{theorem}[definition]{Theorem}
\newtheorem{lemma}[definition]{Lemma}
\numberwithin{equation}{section}
\title{A representation for the reproducing kernel of a weighted Bergman space}
\author[1]{Erwin Mi\~{n}a-D\'{\i}az \thanks{Corresponding author; Email: minadiaz@olemiss.edu}}
\affil[1]{The University of Mississippi,
Department of Mathematics,
Hume Hall 305,P.O. Box 1848,
University, MS 38677-1848, USA.}
\date{}                     
\newcommand{\cj}{\overline}
\newcommand{\D}{\mathbb{D}}
\newcommand{\N}{\mathbb{N}}
\newcommand{\C}{\mathbb{C}}
\begin{document}
\maketitle
\vspace{-.7cm}\centerline{Anal. Math. Phys. 13, 58 (2023).}
\begin{abstract}
For a weight function in the unit disk which is the modulus of a finite product of powers of Blaschke factors, we give a canonical representation for the  reproducing kernel of the corresponding weighted Bergman space in terms of the values of the kernel and its derivatives at the origin. This yields a formula for the contractive zero divisor of a Bergman space corresponding to a finite zero set.  
\end{abstract}
{\bf Keywords:} Bergman spaces, reproducing kernel, zero divisor.
\section{Statement of results}
Let $\D=\{z\in\C:|z|<1\}$ be the unit disk and let $\sigma$ denote the normalized area measure $d\sigma =\pi^{-1}dA$, so that $\sigma(\D)=1$. Let $h\geq 0$ be an integrable function in $\D$, and let $A^2_h$ denote the space of functions $f$ analytic in $\D$ such that 
\begin{align*}
\int_{\D}|f(\zeta)|^2h(\zeta)d\sigma(\zeta)<\infty.
\end{align*}
We write $A^2$ for $h\equiv 1$. 

If $h$ is such that point evaluation functionals $z\mapsto f(z)$ acting on $A^2_h$ are bounded (for instance, if for some $\rho<1$, $\inf_{\rho<|z|<1}h>0$), then there exists a unique function $K(z,\zeta)$ in $\D\times \D$, analytic in $z$ and in $\cj{\zeta}$, such that 
\begin{align}\label{reproducingproperty}
f(z)=\int_{\D}f(\zeta)K(z,\zeta)h(\zeta)d\sigma(\zeta), \quad f\in A^2_h,\ z\in \D.
\end{align}

The function $K(z,\zeta)$ is called the reproducing kernel of the weighted Bergman space $A^2_h$, and has the symmetry property 
\begin{align}\label{symmetryproperty}
	K(z,\zeta)=\cj{K(\zeta,z)}\,.
\end{align}

In this paper, we give a canonical formula for the reproducing kernel corresponding to a weight $h$ of the form 
\begin{align}\label{generalweight}
	h(z)=\prod_{k=1}^s\left|\frac{z-a_k}{1-\cj{a}_kz}\right|^{p_k}\,,
\end{align}
where $s\geq 1$, $p_k>-2$, and $a_k\in \D$ for all $1\leq k\leq s$,  the points $a_k$  being pairwise distinct. The formula is explicit in terms of the values of $K(z,\zeta)$ and its derivatives at the origin, which can be numerically computed by integration.

The subclass of \eqref{generalweight} consisting of weights of the form 
\begin{align}\label{lessgeneralweight}
	h(z)=\left|\prod_{k=1}^s\left(\frac{z-a_k}{1-\cj{a}_kz}\right)^{m_k}\right|^{p},\qquad p>0, \  m_k\in \N,
\end{align}
is of particular importance because for these weights there is a connection between the reproducing kernel and the contractive zero divisors of the Bergman space $A^p$ (see Section \ref{divisors} below).
   
The following notation will be used throughout the paper. For all non-negative integers $k,j$, we  set 
\begin{align*}
		K^{(k,j)}(z,\zeta):=\frac{\partial^{k+j}}{\partial z^k\partial \cj{\zeta}^{j}} K(z,\zeta).
\end{align*}
Note that the equality 
\[
	K^{(k,j)}(z,\zeta)=	\cj{K^{(j,k)}(\zeta,z)}
\]
follows from the symmetry property \eqref{symmetryproperty}, and that differentiating \eqref{reproducingproperty} yields
\begin{align}\label{reproducingpropertyderivative}
	f^{(j)}(z)=\int_{\D}f(\zeta)K^{(j,0)}(z,\zeta)h(\zeta)d\sigma(\zeta), \quad f\in A^2_h,\ z\in \D.
\end{align}

We write 
\begin{align}\label{qandq*}
q(z):=\prod_{k=1}^s(z-a_k),\quad q^*(z):=z^s\cj{q(1/\cj{z})}=\prod_{k=1}^s(1-\cj{a}_kz),
\end{align}
and for each $1\leq k\leq s$, set 
\[
q_k(z):=\underset{j\not=k}{\prod_{1\leq j\leq s}}(z-a_j),\quad q^*_k(z):=\underset{j\not=k}{\prod_{1\leq j\leq s}}(1-\cj{a}_jz)\,.
\]

Let $c_0,\ldots,c_s$ denote the coefficients of $q$, and for each $1\leq k\leq s$, let $c_{k,0},\ldots, c_{k,s-1}$ denote the coefficients of $q_k$, that is, 
\begin{align*}
q(z)=\sum_{\ell=0}^sc_\ell z^\ell,\qquad q_k(z)=\sum_{\ell=0}^{s-1}c_{k,\ell} z^\ell,\qquad  c_s=c_{k,s-1}=1. 
\end{align*}
From these coefficients, we construct the polynomials 
\begin{align}\label{Tkdef}
	\begin{split}
		T_k(z):={}  &z^k\sum_{\ell=0}^{s-2-k}\cj{c_{s-\ell}}z^{\ell},\qquad 0\leq k\leq s-2,
	\end{split}
\end{align}
and 
\begin{align}\label{Lkdef}
		L_k(z,\zeta):={} &\sum_{r=0}^{s-2}	\left(z^r\sum_{\ell=0}^{s-2-r}\cj{c_{k,s-1-\ell}}z^\ell\right)\left(\cj{\zeta}^r\sum_{\ell=0}^{s-2-r}c_{k,s-1-\ell}\cj{\zeta}^\ell\right),
\end{align} 
$1\leq k\leq s$. When $s=1$, the right-hand side of \eqref{Lkdef} acquires the form   $\sum_{r=0}^{-1}\cdots$. This and other empty sums that later occur are to be interpreted as zero.

Our main result is the following formula.

\begin{theorem}\label{maintheorem} The kernel $K(z,\zeta)$ corresponding to the weight \eqref{generalweight} can be written as  
	\begin{align}\label{mainformula}
		K(z,\zeta)=	&\frac{1}{(1-z\cj{\zeta})^2}+\sum_{k=1}^s\frac{\frac{p_k}{2}(1-|a_k|^2)}{(1-z\cj{\zeta})(1-z\cj{a}_k)(1-a_k\cj{\zeta})}+\frac{J(z,\zeta)}{\cj{q^*(\zeta)}q^*(z)},
	\end{align}
where 
\begin{align}\label{defH}\begin{split}
	J(z,\zeta)={}	&\sum_{k=0}^{s-2}\sum_{j=0}^{s-2}\frac{K^{(k,j)}(0,0)}{k!\, j!}T_k(z)\cj{T_j(\zeta)}-\sum_{k=0}^{s-2}(k+1)T_k(z)\cj{T_k(\zeta)}\\
	&-\sum_{k=1}^{s}\frac{p_k}{2}(1-|a_k|^2)L_k(z,\zeta).
\end{split}
\end{align}	
\end{theorem}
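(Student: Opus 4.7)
The plan is to verify that the function $\tilde K(z,\zeta)$ on the right-hand side of \eqref{mainformula} (taking the constants $K^{(k,j)}(0,0)$ as those of the true reproducing kernel) coincides with $K(z,\zeta)$. By uniqueness of the reproducing kernel, it is enough to check (i) that $\tilde K(\cdot,\zeta)\in A^2_h$ for every $\zeta\in\D$, and (ii) that the reproducing identity $f(z)=\int_\D f(\zeta)\tilde K(z,\zeta)h(\zeta)d\sigma(\zeta)$ holds for all $f\in A^2_h$. The Hermitian symmetry $\tilde K(z,\zeta)=\overline{\tilde K(\zeta,z)}$ is built into \eqref{mainformula}--\eqref{defH}, using the relation $K^{(k,j)}(0,0)=\overline{K^{(j,k)}(0,0)}$.

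Property (i) is immediate: the polynomial $q^*(z)=\prod_k(1-\overline{a}_k z)$ has all its zeros outside $\overline{\D}$, the factors $1-z\overline{\zeta}$ and $1-z\overline{a}_k$ do not vanish on $\D$, and $J(z,\zeta)$ is a polynomial in $z$ and $\overline{\zeta}$. Hence $\tilde K(\cdot,\zeta)$ is analytic and bounded on $\D$, so it belongs to $A^2_h$.

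The main step is property (ii). Using density of polynomials in $A^2_h$ (which holds because $p_k>-2$ makes $h$ locally integrable at each $a_k$), it suffices to verify
\[
\int_\D \zeta^n\tilde K(z,\zeta)h(\zeta)d\sigma(\zeta)=z^n,\qquad n\ge 0.
\]
I would split this integral according to the three summands of $\tilde K$. The first two pieces, namely $1/(1-z\overline{\zeta})^2$ and $\sum_k (p_k/2)(1-|a_k|^2)/[(1-z\overline{\zeta})(1-z\overline{a}_k)(1-a_k\overline{\zeta})]$, should reproduce $z^n$ up to two sorts of discrepancy: a mismatch in the Taylor coefficients of bi-index $(k,j)$ with $k,j\le s-2$, and spurious rational contributions with poles at the reflected points $1/\overline{a}_k$ lying outside $\D$. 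The third summand $J(z,\zeta)/(\overline{q^*(\zeta)}q^*(z))$ is engineered to kill both discrepancies: the double sum in \eqref{defH} involving $K^{(k,j)}(0,0)T_k(z)\overline{T_j(\zeta)}/(k!\,j!)$ installs the correct low-order Taylor coefficients, while the subtractions involving $(k+1)T_k(z)\overline{T_k(\zeta)}$ and $(p_k/2)(1-|a_k|^2)L_k(z,\zeta)$ cancel the over-counting produced by the first two pieces.

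The main obstacle will be the algebraic bookkeeping underlying these cancellations. A natural way to organize it is to clear denominators by multiplying the claimed identity through by $\overline{q^*(\zeta)}q^*(z)$, which reduces the verification to a comparison of polynomial expressions in $z$ and $\overline{\zeta}$ of bidegree at most $(s-2,s-2)$. The explicit forms of $T_k(z)$ in \eqref{Tkdef} and $L_k(z,\zeta)$ in \eqref{Lkdef} then appear as precisely the truncated reciprocals of $q^*$ and $q_k$ needed to match coefficients on both sides, and the derivative version \eqref{reproducingpropertyderivative} of the reproducing identity provides the link between the quantities $K^{(k,j)}(0,0)$ and the moment integrals produced by the first two summands.
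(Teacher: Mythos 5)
Your argument is a verification plan rather than a proof: the two assertions that carry all of the weight --- (a) that integrating $\zeta^n$ against the first two summands reproduces $z^n$ up to a low-order Taylor mismatch plus rational terms with poles at the points $1/\overline{a}_k$, and (b) that the third summand $J(z,\zeta)/(q^*(z)\overline{q^*(\zeta)})$ cancels exactly those discrepancies --- are precisely the content of Theorem \ref{maintheorem}, and you assert them without establishing either. Neither is routine. To evaluate $\int_{\D}\zeta^n(1-z\overline{\zeta})^{-2}h(\zeta)\,d\sigma(\zeta)$ and its companions you cannot avoid the weight: you must integrate by parts via Green's formula and differentiate $h$, which is exactly the computation \eqref{greenformula}--\eqref{defQ} in the paper; nothing in your outline supplies it. Worse, there is a circularity problem: your candidate kernel contains the unknown constants $K^{(k,j)}(0,0)$ of the true kernel, and the moments $\int_{\D}\zeta^n\,\overline{T_j(\zeta)/q^*(\zeta)}\,h(\zeta)\,d\sigma(\zeta)$ produced by the third summand are equally unknown (they depend on $h$ and are not computable in closed form). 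Showing that these two families of unknown quantities conspire to produce exactly $z^n$ requires a relation between them, which is what \eqref{secondkernelformula} and \eqref{kernelderivativeformula} encode and what must actually be proved; your proposal takes it for granted. Finally, the claim that clearing denominators reduces the check to comparing polynomials of bidegree at most $(s-2,s-2)$ is not correct as stated: after multiplying by $q^*(z)\overline{q^*(\zeta)}$ the first two summands still carry the factors $(1-z\overline{\zeta})^{-2}$ and $\bigl((1-z\overline{\zeta})(1-z\overline{a}_k)(1-a_k\overline{\zeta})\bigr)^{-1}$, and extracting from them a genuinely polynomial discrepancy is the demanding part of the argument (in the paper, the decomposition of $N(z,\zeta)$ into $N_1$, $N_2$, $M_k$ and identities such as \eqref{firstexpressionforJ1}, \eqref{expressionforHk}), which your sketch leaves entirely open.

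For comparison, the paper does not verify a guessed kernel at all; it derives the formula. Starting from the residue identity \eqref{Cauchyformula} for $f$ analytic on $\overline{\D}$, Green's formula converts the boundary integral into an area integral against $h$, giving the preliminary representation \eqref{firstkernelformula}; the Hermitian symmetry of $K$ and differentiation at $z=0$ then eliminate the functions $K^{(k,0)}(0,\zeta)$ in favor of the constants $K^{(k,j)}(0,0)$, yielding \eqref{formulafirstpart}; and the second half of the proof is the explicit algebra (the lemma of identities, the polynomials $V_n$, and the reduction of $N_1$, $N_2$, $M_k$) that puts the result in the symmetric form \eqref{mainformula}. If you want to pursue your verification route, you would still need both halves of that work: the Green's-formula computation to identify the discrepancy exactly, and the polynomial identities to match it against \eqref{defH}; at that point you have essentially reproduced the paper's proof rather than found an alternative to it.
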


For $s=1$, $J(z,\zeta)\equiv 0$ and \eqref{mainformula} reduces to  
	\begin{align*}
	K(z,\zeta)=	&\frac{1}{(1-z\cj{\zeta})^2}+\frac{\frac{p_1}{2}(1-|a_1|^2)}{(1-z\cj{\zeta})(1-z\cj{a}_1)(1-a_1\cj{\zeta})}.
\end{align*}

For $s=2$, \eqref{mainformula} yields 
	\begin{align}\label{Hansboformula}\begin{split}
	K(z,\zeta)	={} & \frac{1}{(1-z\cj{\zeta})^2}+\sum_{k=1}^2\frac{\frac{p_k}{2}(1-|a_k|^2)}{(1-z\cj{\zeta})(1-z\cj{a}_k)(1-a_k\cj{\zeta})}\\
	& +\frac{K(0,0)-1-\frac{p_1}{2}(1-|a_1|^2)-\frac{p_2}{2}(1-|a_2|^2)}{(1-\cj{a}_1z)(1-\cj{a}_2z)(1-a_1\cj{\zeta})(1-a_2\cj{\zeta})}. 
	\end{split}
\end{align}

Formula \eqref{Hansboformula} was obtained by Hansbo in \cite[Thm. 2.7]{JH} by means of a two-step process that initially uses an iterative procedure to prove the formula for even integer values of $p_1$ and $p_2$, and then transitions from integer to arbitrary  values of $p_1$ and $p_2$ by way of a function theoretical argument involving Carlson's Theorem. Hansbo's method seems to become too intricate when higher values of $s$ are considered, but for $s=2$ it yields the following elegant representation for $K(0,0)$: 
\[
K(0,0)=1+\sum_{k=1}^2\frac{p_k}{2}(1-|a_k|^2)+|a_1-a_2|^2\frac{\sum_{j=1}^\infty j\binom{p_1/2}{j}\binom{p_2/2}{j}\lambda^{j}}{\sum_{j=0}^\infty \binom{p_1/2}{j}\binom{p_2/2}{j}\lambda^{j}},
\]
with $\lambda=(1-|a_1|^2)(1-|a_2|^2)/|1-a_1\cj{a}_2|^2$. We do not know whether this representation generalizes in some satisfactory way to all the derivatives of the kernel in \eqref{defH}, but at least in a structural sense, formula \eqref{mainformula} extends Hansbo's \eqref{Hansboformula} to an arbitrary $s$. 

Besides Hansbo's work, the paper by MacGregor and Stessin \cite{MGS} established a couple of structural formulas for the kernel in the context of Bergman spaces $A^p$, for weights $h$ of the form \eqref{lessgeneralweight} with simple zeros (i.e., $m_k=1$, $1\leq k\leq s$). These formulas equally extend to the more general weight \eqref{generalweight} with little  change in their statements. First, the canonical representation (13) of \cite{MGS} can now be obtained as a corollary of Theorem \ref{maintheorem} as follows. Since the polynomials $q^*_1,\ldots, q^*_s$ are linearly independent, $J(z,\zeta)$ can be expanded as a sum
\[
J(z,\zeta)=\sum_{1\leq k,j\leq s}d_{k,j}q^*_k(z)\cj{q^*_j(\zeta)}\,,
\]
where the coefficients $d_{k,j}$ are given by 
\[
d_{k,j}=\frac{J(1/\cj{a}_k,1/\cj{a}_j)}{q^*_k(1/\cj{a}_k)\cj{q^*_j(1/\cj{a}_j)}}
\]
(if some $a_k$ equals zero, the formula for $d_{k,j}$ has to be adjusted). This gives 
	\begin{align}\label{canonicalrepresentation}
		\begin{split}
	K(z,\zeta)={}	&\frac{1}{(1-z\cj{\zeta})^2}+\sum_{k=1}^s\frac{\frac{p_k}{2}(1-|a_k|^2)}{(1-z\cj{\zeta})(1-z\cj{a}_k)(1-a_k\cj{\zeta})}\\
	&+\sum_{1\leq k,j\leq s}\frac{d_{k,j}}{(1-z\cj{a}_k)(1-a_j\cj{\zeta})}\,.
	\end{split}
\end{align}
Taking  $p_k=p$ in \eqref{canonicalrepresentation} yields an equivalent form of \cite[Eq. (13)]{MGS}. 

The second representation \cite[Eq. (7)]{MGS} that MacGregor and Stessin provide is in terms of the Blaschke products 
\begin{align}\label{Blaschkeproduct}
B(z):=\frac{q(z)}{q^*(z)}, \quad B_k(z):=\frac{q_k(z)}{q^*_k(z)},
\end{align}
and the values of the kernel at the points $a_k$. It results from taking $p_k=p$ in the following formula, more generally valid for a weight $h$ as in \eqref{generalweight}: 
\begin{align}\label{GSformula}
		\begin{split}
			K(z,\zeta)={} &\frac{B(z)\cj{B(\zeta)}}{(1-\cj{\zeta}z)^2}+\sum_{k=1}^s\frac{(\frac{p_k}{2}+1)B_k(z)\cj{B_k(\zeta)}(\cj{\zeta}z-|a_k|^2)}{(1-\cj{\zeta}z)(1-\cj{a}_kz)(1-a_k\cj{\zeta})}\\
			&+\sum_{k=1}^s\sum_{j=1}^s\frac{(1-|a_k|^2)(1-|a_j|^2)B_k(z)\cj{B_j(\zeta)}}{(1-\cj{a}_kz)(1-a_j\cj{\zeta})B_k(a_k)\cj{B_j(a_j)}}K(a_k,a_j)\,.
		\end{split}
	\end{align}

We prove \eqref{GSformula} by following  the original argument in \cite{MGS}, just noting that that argument works equally well for a weight $h$ as in \eqref{generalweight}. The first part of the proof of Theorem \ref{maintheorem} (i.e., up to  \eqref{secondkernelformula}) is also based on the same idea, but applied in a way in which values at the origin are involved instead of values at the points $a_k$. The second part of the proof of Theorem \ref{maintheorem} is more demanding, and consists of performing the algebraic manipulations needed to get the kernel written in the reduced and symmetrical form \eqref{mainformula}.

\subsection{Contractive zero divisors}\label{divisors}

For a fixed $0<p<\infty$, let $A^p$ denote the Bergman space of functions $f$ analytic in $\D$ and such that 
\[
\|f\|_p:=\left(\int_{\D}|f(\zeta)|^pd\sigma(\zeta)\right)^{1/p}<\infty.
\]

Any zero set of a function in $A^p$ (each zero counted as many times as its multiplicity) is called an $A^p$ zero set. For each $A^p$ zero set $Z$, there exists a unique (up to a unimodular constant factor) function $G\in A^p$ of norm $\|G\|_p=1$  such that for every $f\in A^p$ that vanishes on $Z$, the function $f/G\in A^p$ and $\|f/G\|_p\leq \|f\|_p$.  This function $G$ is called the contractive zero divisor (or canonical divisor) of the zero set $Z$. We refer the reader to \cite[Ch. 5]{DSch} for the relevant theory and history. 

 For a finite zero set, say consisting of the points $a_1,\ldots,a_s $, each $a_k$ of multiplicity $m_k$, the canonical divisor is given by \cite[Ch. 5, Thm. 4]{DSch}
\begin{align}\label{canonicaldivisor1}
G(z)=K(0,0)^{-1/p}K(z,0)^{2/p}\prod_{k=1}^s\left(\frac{z-a_k}{1-\cj{a}_kz}\right)^{m_k}\,,
\end{align}
where $K(z,\zeta)$ is the reproducing kernel for the weight \eqref{lessgeneralweight}. If we therefore evaluate \eqref{mainformula} at $\zeta=0$, we get a structural formula for the canonical divisor, the evaluation resulting in 
\begin{align}\label{canonicaldivisor}
	\begin{split}
	K(z,0)={}	&\frac{\cj{q(0)}z^s}{q^*(z)}+\frac{z^{s-1}}{q^*(z)}\sum_{k=1}^{s}\cj{q_k(0)}\left(1+\frac{p_k}{2}(1-|a_k|^2)\right)\\
	&+\frac{1}{q^*(z)}\sum_{k=0}^{s-2}\frac{K^{(k,0)}(0,0)}{k!}T_k(z),
	\end{split}
\end{align}
where (in this context) $p_k=m_kp$, $1\leq k\leq s$. 

We remark that, indeed, to get \eqref{canonicaldivisor} one does not need the reduced symmetric  representation \eqref{mainformula}. Evaluating at $\zeta=0$ in the less refined version   \eqref{firstkernelformula} provides a shorter proof.  

Since the representation \eqref{canonicaldivisor1} already involves values of the kernel at $z=0$ and $\zeta=0$, it is  particularly fitting that \eqref{canonicaldivisor} is equally expressed in terms of derivatives of the kernel at the origin.

\subsection{Computing the values $K^{(k,j)}(0,0)$}
Let us now see how the values $K^{(k,j)}(0,0)$ that occur in \eqref{defH} can be determined by solving  certain linear systems of equations that arise from integration. 
   
Note that the polynomial $T_k$ defined in \eqref{Tkdef} can be characterized as the unique polynomial of degree $\leq s-2$ such that 
\begin{align*}
	\left[\frac{T_k}{q^*}\right]^{(\ell)}(0)=\delta_{k,\ell}k!,\qquad 0\leq \ell\leq s-2.
\end{align*}

From  \eqref{secondkernelformula} and \eqref{defQ} we find  
\begin{align}\label{kernelderivativeformula}
	\sum_{j=0}^{s-2}\frac{K^{(k,j)}(0,0)}{k! j!}\frac{\cj{T_j(\zeta)}}{\cj{q^*(\zeta)}}={} \frac{K^{(k,0)}(0,\zeta)}{k!}-\frac{\cj{P_k(\zeta)}}{\cj{q^*(\zeta)}},\qquad 0\leq k\leq s-2,
\end{align}
where 
\begin{align*}
\cj{P_n(\zeta)}={} &\frac{\cj{\zeta}^{s-1}}{n!}\left.\frac{\partial^{n} }{\partial z^{n}}\left(\frac{\partial}{\partial z}\left(\frac{q(z)}{1-z\cj{\zeta}}\right)+\frac{1}{1-z\cj{\zeta}}\sum_{k=1}^s \frac{\frac{p_k}{2}(1-|a_k|^2)q_k(z)}{1-\cj{a}_kz}\right)\right|_{z=0} \,.
\end{align*}
After computing the derivatives, we can alternatively write 
\[
\cj{P_n(\zeta)}=(n+1)q(0)\cj{\zeta}^{s+n}+\cj{\zeta}^{s-1}\sum_{j=0}^{n}b_{n,j}\cj{\zeta}^j\,,
\]
with 
\begin{align*}
	b_{n,j}={} & (n+1)c_{n+1-j}+\sum_{k=1}^{s}\frac{p_k}{2}(1-|a_k|^2)\sum_{r=0}^{n-j}	(\cj{a}_k)^{r}c_{k,n-j-r}\,.
\end{align*}

By \eqref{reproducingpropertyderivative}, we have that for all $0\leq n\leq s-2$,
\begin{align*}
	\int_{\D}\frac{T_n(\zeta)}{q^*(\zeta)}K^{(k,0)}(0,\zeta)h(\zeta)d\sigma(\zeta)={} &	\left[\frac{T_n}{q^*}\right]^{(k)}(0)=\delta_{n,k}n!\,,
\end{align*}
so that after multiplying \eqref{kernelderivativeformula} by $T_n(\zeta)/q^*(\zeta)$ and integrating with respect to $hd\sigma$, we obtain
\begin{align*}
 \sum_{j=0}^{s-2}\frac{K^{(k,j)}(0,0)}{k! j!}\langle T_n,T_j\rangle=\delta_{n,k}-\langle T_n,P_k\rangle,\qquad 0\leq n\leq s-2,
\end{align*}
with $\langle f,g \rangle:=\int_{\D}f\cj{g}|q^*|^{-2}hd\sigma$.  Since the polynomials  $T_k$ are linearly independent, the Gram matrix $\left(\langle T_n,T_j\rangle\right)_{n,j=0}^{s-2}$ has a non-zero determinant, and so for each $0\leq k\leq s-2$, the vector $(K^{(k,j)}(0,0))_{j=0}^{s-2}$ is the solution of a non-singular linear system of equations whose coefficients and constants can be numerically computed by integration.

 \section{Proofs}
 Both Theorem \ref{maintheorem} and Formula \eqref{GSformula} will be proved with the help of Green's Formula 
 \[
 \frac{1}{2\pi i}\int_{\partial \Omega} g(\zeta)d\zeta=\int_{\Omega}\frac{\partial g}{\partial \cj{\zeta}}\,d\sigma(\zeta)\,,
 \]
 valid for $\Omega$ a finitely connected domain bounded by analytic Jordan curves and $g\in C^1(\cj{\Omega})$.

The proof of Theorem \ref{maintheorem} relies on many heavy computations that at times require the use of several identities. To make it easier to read, we first gather in a lemma a few of these identities that involve the polynomials $q$ and $q^*$ previously defined in \eqref{qandq*}, as well as the polynomial\begin{align*}
  {q'}^*(t):=t^{s-1}\cj{q'(1/\cj{t})}.
\end{align*} 

Note that since $q(z)=\sum_{j=0}^sc_j z^j$ and $q^*(z)=z^s\cj{q(1/\cj{z})}=\sum_{j=0}^s\cj{c_{s-j}}z^j$, we have
\begin{align}\label{coefficientsasTaylor}
c_j=\frac{q^{(j)}(0)}{j!} , \quad \cj{c_{s-j}}=\frac{{q^*}^{(j)}(0)}{j!},\quad 0\leq j\leq  s.
\end{align}

 \begin{lemma} The following identities hold true:
  	\begin{align}\label{InsideD}
 	\begin{split}		&\frac{\cj{\zeta}(1-z\cj{\zeta})(q^*q)(t)}{(z-t)(1-t\cj{\zeta})^2}+\frac{(1-z\cj{\zeta})(q^*q')(t)}{(z-t)(1-t\cj{\zeta})}\\
 		&={} \frac{\cj{\zeta}(q^*q)(t)}{(z-t)(1-t\cj{\zeta})}-	\frac{\partial}{\partial t}\left(\frac{\cj{\zeta}(q^*q)(t)}{(1-t\cj{\zeta})}\right)+  	\frac{(q^*q')(t)}{(z-t)} +\frac{\cj{\zeta}({q^*}'q)(t)}{(1-t\cj{\zeta})},
 	\end{split}
 \end{align}
 
 	\begin{align}\label{starsandderivatives}
 		t{q^*}'(t)=sq^*(t)-{q'}^*(t),
 	\end{align}
  	\begin{align}\label{firsttermJ1}
 	z^s	\cj{(q^*q)(\zeta)}={} &z^s\sum_{\ell=0}^{2s}\frac{(q^*q)^{(\ell)}(0)}{\ell !}\cj{\zeta}^{2s-\ell},
 	\end{align}	
 \begin{align}\label{newidentity5}
 \frac{(q^*q)^{(\ell)}(0)}{\ell !}=\frac{\cj{(q^*q)^{(2s-\ell)}(0)}}{(2s-\ell) !},\qquad 0\leq \ell\leq 2s,
 \end{align}

 \begin{align}\label{newidentity2}
 	z^{s-1}\cj{(q^*q')(\zeta)} ={}	z^{s-1}\sum_{\ell=0}^{2s-1}\frac{\cj{(q^*q')^{(\ell)}(0)}}{\ell !}\cj{\zeta}^{\ell},
 \end{align} 
 	\begin{align}\label{newidentity1}
 	\frac{(q{q'}^*)^{(\ell)}(0)}{\ell !}=\frac{\cj{(q^*q')^{(2s-1-\ell)}(0)}}{(2s-1-\ell) !},\quad 0\leq \ell\leq 2s-1,
 	\end{align}
  \begin{align}\label{newidentity4}
  	\frac{(q^*q)^{(\ell)}(0)}{\ell !}=\sum_{k=0}^{\ell}c_{k}\cj{c_{s-\ell+k}},\qquad 0\leq \ell
  	\leq s,
  \end{align}
  \begin{align}\label{newidentity3}
 	\frac{(q^*q')^{(\ell)}(0)}{\ell !}={}\sum_{j=1}^{\ell+1}jc_{j}\cj{c_{s-1-\ell+j}}\,,\quad 0\leq \ell\leq s-1.
 \end{align}

  \end{lemma}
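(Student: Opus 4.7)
The plan is to verify each identity by direct computation, organizing them into three groups according to the technique used. I will first dispose of the simplest ones coming from Cauchy-product expansions, then the ones relying on a self-reciprocal symmetry of the polynomials in question, and finally the longer rational-function identity \eqref{InsideD}.

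First, I would handle \eqref{starsandderivatives}, \eqref{newidentity4}, \eqref{newidentity3}, and \eqref{newidentity2}. Using \eqref{coefficientsasTaylor} together with the definition of $q^*$, I would write $q^*(t)=\sum_{j=0}^{s}\cj{c_{s-j}}t^j$, so that $t{q^*}'(t)=\sum_{j=0}^{s}j\cj{c_{s-j}}t^{j}$ and $sq^*(t)-t{q^*}'(t)=\sum_{j=0}^{s-1}(s-j)\cj{c_{s-j}}t^{j}$, which matches the analogous power-series expansion of ${q'}^*(t)=t^{s-1}\cj{q'(1/\cj{t})}$ obtained from $q'(w)=\sum_{j=1}^{s}jc_{j}w^{j-1}$; this yields \eqref{starsandderivatives}. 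Identities \eqref{newidentity4} and \eqref{newidentity3} are just direct Cauchy-product computations of the coefficient of $z^\ell$ in $q^*q$ and $q^*q'$ respectively. Identity \eqref{newidentity2} is nothing more than the Taylor expansion of the polynomial $\cj{(q^*q')(\zeta)}$ of degree at most $2s-1$ in $\cj{\zeta}$, multiplied by $z^{s-1}$.

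For \eqref{newidentity5} and \eqref{newidentity1}, the key observation is that $q^*q$ and $qq'^*$ possess a self-reciprocal symmetry. Indeed, from $\cj{q(1/\cj{z})}=q^*(z)/z^s$, I would derive $z^{2s}\cj{(q^*q)(1/\cj{z})}=(q^*q)(z)$, and comparing Taylor coefficients of the two sides immediately produces \eqref{newidentity5}. Similarly, from $\cj{q'(1/\cj{z})}=\,{q'}^*(z)/z^{s-1}$, I would obtain the mixed relation $z^{2s-1}\cj{(q{q'}^*)(1/\cj{z})}=(q^*q')(z)$, which upon matching coefficients gives \eqref{newidentity1}. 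Identity \eqref{firsttermJ1} is then the complex-conjugate Taylor expansion of $(q^*q)(\zeta)$ reindexed by $\ell\mapsto 2s-\ell$, combined with \eqref{newidentity5}.

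The one requiring real work is \eqref{InsideD}. My plan is to expand the derivative appearing on the right-hand side by the product and quotient rules,
\[
\frac{\partial}{\partial t}\!\left(\frac{\cj{\zeta}(q^*q)(t)}{1-t\cj{\zeta}}\right)=\frac{\cj{\zeta}(q^*q)'(t)}{1-t\cj{\zeta}}+\frac{\cj{\zeta}^{2}(q^*q)(t)}{(1-t\cj{\zeta})^{2}},
\]
and then use the Leibniz rule $(q^*q)'=(q^*)'q+q^*q'$ so that the term $\cj{\zeta}({q^*}'q)(t)/(1-t\cj{\zeta})$ on the right-hand side cancels the corresponding piece coming from $(q^*q)'$. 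What survives is
\[
\frac{\cj{\zeta}(q^*q)(t)}{(z-t)(1-t\cj{\zeta})}-\frac{\cj{\zeta}^{2}(q^*q)(t)}{(1-t\cj{\zeta})^{2}}+\frac{(q^*q')(t)}{z-t}-\frac{\cj{\zeta}(q^*q')(t)}{1-t\cj{\zeta}},
\]
and combining the first pair and the last pair of fractions over common denominators yields in both cases the numerator $1-z\cj{\zeta}$, giving exactly the two terms of the left-hand side. The main obstacle here is purely bookkeeping: keeping track of the four or five rational terms without sign errors. Nothing in the argument requires more than elementary manipulation, so the lemma will be established once these computations are carried out carefully.
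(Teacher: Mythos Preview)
Your proposal is correct and follows essentially the same approach as the paper's proof: the paper also expands the derivative in \eqref{InsideD} via the product/quotient rule and cancels the $({q^*}'q)$ piece, derives \eqref{starsandderivatives} from the definition $q^*(t)=t^s\cj{q(1/\cj{t})}$, obtains \eqref{newidentity5}, \eqref{newidentity1}, and \eqref{firsttermJ1} from the self-reciprocal relations $z^{2s}\cj{(q^*q)(1/\cj{z})}=(q^*q)(z)$ and $\cj{(q^*q')(\zeta)}=\cj{\zeta}^{2s-1}(qq'^*)(1/\cj{\zeta})$, and gets \eqref{newidentity4}--\eqref{newidentity3} from Leibniz's rule (equivalently, the Cauchy product). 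The only differences are in the order of presentation and in your handling of \eqref{InsideD} from RHS to LHS rather than the paper's split of the LHS into two pieces, which is purely cosmetic.
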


 \begin{proof} Using that
 	\begin{align*}	
 		\frac{\partial}{\partial t}\left(\frac{\cj{\zeta}(q^*q)(t)}{(1-t\cj{\zeta})}\right)=\frac{\cj{\zeta}^2(q^*q)(t)}{(1-t\cj{\zeta})^2}+\frac{\cj{\zeta}({q^*}'q)(t)}{(1-t\cj{\zeta})}+\frac{\cj{\zeta}(q^*q')(t)}{(1-t\cj{\zeta})},
 	\end{align*}
 	one easily verifies the relation
 	\begin{align*}
 	 		\frac{\cj{\zeta}(1-z\cj{\zeta})(q^*q)(t)}{(z-t)(1-t\cj{\zeta})^2}={} &\frac{\cj{\zeta}(q^*q)(t)}{(z-t)(1-t\cj{\zeta})}+ \frac{\cj{\zeta}(q^*q')(t)}{(1-t\cj{\zeta})}+\frac{\cj{\zeta}({q^*}'q)(t)}{(1-t\cj{\zeta})}-	\frac{\partial}{\partial t}\left(\frac{\cj{\zeta}(q^*q)(t)}{(1-t\cj{\zeta})}\right).
 	\end{align*}
 
On the other hand, we clearly have  
 	\begin{align*}
 		\frac{(1-z\cj{\zeta})(q^*q')(t)}{(z-t)(1-t\cj{\zeta})}={} &	\frac{(q^*q')(t)}{(z-t)}-		\frac{\cj{\zeta} (q^*q')(t)}{(1-t\cj{\zeta})},
 	\end{align*}
and we get \eqref{InsideD} by adding these two latter equalities.

 	Since $q^*(t)=t^s\cj{q(1/\cj{t})}$, we have 
	\begin{align*}
	t{q^*}'(t)={} &t\left(t^s\cj{q(1/\cj{t})}\right)'=t\left(st^{s-1}\cj{q(1/\cj{t})}+t^{s}\cj{q'(1/\cj{t})}(-t^{-2})\right),
\end{align*}	
which is equal to the right-hand side of \eqref{starsandderivatives}.

We next make use of the Maclaurin expansions 
\begin{align}
(q^*q)(\zeta)={} &\sum_{\ell=0}^{2s}\frac{(q^*q)^{(\ell)}(0)}{\ell !}\zeta^\ell\,,\label{taylor-1}\\
	(q^*q')(\zeta)={} &\sum_{\ell=0}^{2s-1}\frac{(q^*q')^{(\ell)}(0)}{\ell !}\zeta^{\ell},\label{taylor0}\\
	(qq'^*)(\zeta)={} &\sum_{\ell=0}^{2s-1}\frac{(qq'^*)^{(\ell)}(0)}{\ell !}\zeta^\ell\label{taylor1}\,.
\end{align}

Using \eqref{taylor-1} we get 
\begin{align}\label{newidentity6}
(q^*q)(\zeta)=\zeta^{2s}\cj{(q^*q)(1/\cj{\zeta})}=\sum_{\ell=0}^{2s}\frac{\cj{(q^*q)^{(\ell)}(0)}}{\ell !}\zeta^{2s-\ell}.
\end{align}
The identity \eqref{firsttermJ1} arises by taking the conjugate in \eqref{newidentity6}, while \eqref{newidentity5} is derived by comparing the Taylor coefficients of \eqref{taylor-1} with those of \eqref{newidentity6}.

 Taking conjugates in \eqref{taylor0} yields \eqref{newidentity2}. But we can also use \eqref{taylor1} to get
 \begin{align}\label{taylor2}
\cj{(q^*q')(\zeta)}=\cj{\zeta}^{2s-1}(q{q'}^*)(1/\cj{\zeta})=\sum_{\ell=0}^{2s-1}\frac{(qq'^*)^{(\ell)}(0)}{\ell !}\cj{\zeta}^{2s-1-\ell},
\end{align}
so that \eqref{newidentity1} follows by comparing the Taylor coefficients of \eqref{newidentity2} and \eqref{taylor2}. 

To prove \eqref{newidentity4} and \eqref{newidentity3}, we  first use Leibnitz's product rule to expand the derivatives $(q^*q)^{(\ell)}(0)$ and $(q^*q')^{(\ell)}(0)$, and then rewrite the resulting sums in terms of the coefficients $c_j$ by means of \eqref{coefficientsasTaylor}.

\end{proof}

\subsection{Proof of Theorem \ref{maintheorem}}
Let $f$ be a function analytic in the closed unit disk  $\cj{\D}$. By an application of the residue theorem, we get that for all $z\in \D$,
\begin{align}\label{Cauchyformula}
	\begin{split}
		f(z)={} &	\frac{z^{s-1}}{q^*(z)}	\frac{1}{2\pi i}\int_{|\zeta|=1}\frac{f(\zeta)q^*(\zeta)}{\zeta^{s-1}(\zeta-z)}d\zeta+\frac{1}{q^*(z)}\sum_{j=0}^{s-2}\frac{f^{(j)}(0)}{j!}T_j(z)\,,
	\end{split}
\end{align}
where
\begin{align}\label{defTk}
T_k(z)=	\frac{k!\,z^{s-1}}{(s-2)!}\binom{s-2}{k}\left[\frac{\partial^{s-2-k}}{\partial t^{s-2-k}}\left(\frac{q^*(t)}{z-t}\right)\right]_{t=0}\,.
\end{align}

Making use of \eqref{coefficientsasTaylor}, this can written as 
 \begin{align*}
	\begin{split}
		T_k(z)={}  &\frac{k! z^{s-1}}{(s-2)!}\binom{s-2}{k}\sum_{\ell=0}^{s-2-k}\binom{s-2-k}{\ell}{q^*}^{(\ell)}(0)\frac{(s-2-k-\ell)!}{ z^{s-1-k-\ell}}\\
		={}  &z^k\sum_{\ell=0}^{s-2-k}\cj{c_{s-\ell}}z^{\ell}.
	\end{split}
\end{align*}
Note that this is the same $T_k$ that was previously defined in \eqref{Tkdef}.

We now seek to express the integral in \eqref{Cauchyformula} as an integral over $\D$ with respect to the weight $h$ given by \eqref{generalweight}. On the unit circle $|\zeta|=1$, $h$ is constant $1$, and also $\cj{\zeta}=1/\zeta$. Then, by an application  of  Green's Formula we obtain that for all $z\in \D$, 
\begin{align}\label{greenformula}
	\begin{split}
		\frac{1}{2\pi i}\int_{|\zeta|=1}\frac{f(\zeta)q^*(\zeta)}{\zeta^{s-1}(\zeta-z)}d\zeta={} &	\frac{1}{2\pi i}\int_{|\zeta|=1}\frac{f(\zeta)\prod_{k=1}^s(1-\cj{a}_k\zeta)}{\zeta^{s-1}(\zeta-z)}d\zeta\\
		={} & 	\frac{1}{2\pi i}\int_{|\zeta|=1}\frac{1}{(1-z\cj{\zeta})}\cj{q(\zeta)}h(\zeta)f(\zeta)d\zeta\\
		={} & 	\int_{\D}\frac{\partial}{\partial \cj{\zeta}}\left(\frac{1}{(1-z\cj{\zeta})}\cj{q(\zeta)}h(\zeta)\right)		f(\zeta)d\sigma(\zeta)\,.
	\end{split}
\end{align}
We clarify that to get \eqref{greenformula} we apply Green's Formula in a subregion of $\D$ that lies exterior to  small circles around each $a_k$ for which  $p_k<0$, and then take limits as the radii of these circles go to $0$. 

The function   
\begin{align}\label{defw}
w(z):=\prod_{k=1}^s\left(\frac{z-a_k}{1-\cj{a}_kz}\right)^{p_k/2}
\end{align}
admits a single-valued branch around any point of $\D$ that is not an $a_k$. Since $h(z)=|w(z)|^2$, we have 
\begin{align}\label{partialformula1}
	\begin{split}
		\frac{\partial}{\partial \cj{\zeta}}\left(\frac{\cj{q(\zeta)}h(\zeta)}{1-z\cj{\zeta}}\right)	={} &\frac{z}{(1-z\cj{\zeta})^2}\cj{q(\zeta)}h(\zeta)+	\frac{w(\zeta)}{1-z\cj{\zeta}}\cj{\frac{\partial}{\partial \zeta}\left(q(\zeta)w(\zeta)\right)}\,.
	\end{split}
\end{align}

By direct computation of the derivative using the product rule, we find 
\begin{align}\label{partialformula2}
	\begin{split}
	\frac{\partial}{\partial \zeta}\left(q(\zeta)w(\zeta)\right)={} &w(\zeta)\sum_{k=1}^sq_k(\zeta) \left(1+\frac{p_k}{2}\frac{1-|a_k|^2}{1-\cj{a}_k\zeta}\right).
	\end{split}
\end{align}

From this point onward we will use the abbreviation 
\[
A_k:=\frac{p_k}{2}(1-|a_k|^2),\qquad 1\leq k\leq s.
\]
The actual values of these constants play no role in what remains of the proof. 

The equalities \eqref{partialformula1} and \eqref{partialformula2} then combine into 
\begin{align}\label{partialformula3}
	\begin{split}
	\frac{\partial}{\partial \cj{\zeta}}\left(\frac{\cj{q(\zeta)}h(\zeta)}{1-z\cj{\zeta}}\right)={}  &h(\zeta)Q(z,\zeta),
	\end{split}
\end{align}
with 
\begin{align}\label{defQ}
Q(z,\zeta):=\frac{z\cj{q(\zeta)}}{(1-z\cj{\zeta})^2}+	\frac{1}{(1-z\cj{\zeta})}\sum_{k=1}^s\cj{q_k(\zeta)} \left(1+\frac{A_k}{1-a_k\cj{\zeta}}\right).
\end{align}

If we now write the derivatives in \eqref{Cauchyformula} in the integral form
\[
f^{(j)}(0) =\int_{\D} f(\zeta)K^{(j,0)}(0,\zeta)h(\zeta)d\sigma(\zeta),
\]
then \eqref{Cauchyformula} can be written with the help of \eqref{greenformula} and \eqref{partialformula3} as
\begin{align*}
	\begin{split}
		f(z)={} &\int_{\D}\frac{f(\zeta)}{q^*(z)}\left(z^{s-1}Q(z,\zeta)+\sum_{j=0}^{s-2}\frac{T_j(z)}{j!}K^{(j,0)}(0,\zeta)\right)h(\zeta)d\sigma(\zeta)\,.
	\end{split}
\end{align*}
By a density argument, this reproducing property also holds for every $f\in A^2_h$, and so we must have  
\begin{align}\label{firstkernelformula}
K(z,\zeta)=	\frac{z^{s-1}Q(z,\zeta)}{q^*(z)}+\frac{1}{q^*(z)}\sum_{k=0}^{s-2}\frac{T_k(z)K^{(k,0)}(0,\zeta)}{k!}\,.
\end{align}

By the symmetry of the kernel,
\[
K(z,\zeta)=	\frac{\cj{\zeta^{s-1} Q(\zeta,z)}}{\cj{q^*(\zeta)}}+\frac{1}{\cj{q^*(\zeta)}}\sum_{j=0}^{s-2}\frac{\cj{T_j(\zeta)}K^{(0,j)}(z,0)}{j!}\,.
\]

Taking derivatives with respect to $z$ and evaluating at $z=0$ yields  
\begin{align}\label{secondkernelformula}
	K^{(k,0)}(0,\zeta)={} &	\frac{\cj{\zeta}^{s-1} }{\cj{q^*(\zeta)}}\left[\frac{\partial^k}{\partial z^k}\cj{Q(\zeta,z)}\right]_{z=0}+\frac{1}{\cj{q^*(\zeta)}}\sum_{j=0}^{s-2}\frac{\cj{T_j(\zeta)}K^{(k,j)}(0,0)}{j!}\,.
\end{align}

Substituting \eqref{secondkernelformula} into \eqref{firstkernelformula} we get
\begin{align}\label{formulafirstpart}
	\begin{split}
	K(z,\zeta)={}	&	\frac{z^{s-1}Q(z,\zeta)}{q^*(z)}+\frac{\cj{\zeta}^{s-1}}{\cj{q^*(\zeta)}q^*(z)}\sum_{k=0}^{s-2}\frac{T_k(z)}{k!}\left[\frac{\partial^k}{\partial z^k}\cj{Q(\zeta,z)}\right]_{z=0}\\
	&+\frac{1}{\cj{q^*(\zeta)}q^*(z)}\sum_{k=0}^{s-2}\sum_{j=0}^{s-2} \frac{K^{(k,j)}(0,0)}{k! j!}T_k(z)\cj{T_j(\zeta)}\,.
	\end{split}
\end{align}

This concludes the first part of the proof. The second part amounts to performing the algebraic manipulations needed to reduce \eqref{formulafirstpart} to \eqref{mainformula}.
We begin by writing \eqref{formulafirstpart}  as 
\begin{align}\label{KintermsofJ}
	K(z,\zeta)={}	&\frac{N(z,\zeta)}{(1-z\cj{\zeta})^2q^*(z)\cj{q^*(\zeta)}}+\frac{1}{q^*(z)\cj{q^*(\zeta)}}\sum_{k=0}^{s-2}\sum_{j=0}^{s-2}\frac{K^{(k,j)}(0,0)}{k! j!}T_k(z)\cj{T_j(\zeta)}\,,
\end{align}
with
\begin{align}\label{defJ}
	\begin{split}
	N(z,\zeta):=	{}&	(1-z\cj{\zeta})^2z^{s-1}\cj{q^*(\zeta)}Q(z,\zeta)\\
	&+(1-z\cj{\zeta})^2\cj{\zeta}^{s-1}\sum_{k=0}^{s-2}\frac{T_k(z)}{k!}\left[\frac{\partial^k}{\partial z^k}\cj{Q(\zeta,z)}\right]_{z=0}\,.
	\end{split}
\end{align}

By \eqref{defQ}, and since 
\[
q'(t)=\sum_{k=1}^sq_k(t)\,,
\]
we have
\begin{align}\label{firstpartofJ}
	\begin{split}
	(1-z\cj{\zeta})^2z^{s-1}\cj{q^*(\zeta)}Q(z,\zeta)={} &z^s\cj{(q^*q)(\zeta)}+ (1-z\cj{\zeta})z^{s-1}\cj{(q^*q')(\zeta)} \\
	&+ (1-z\cj{\zeta})z^{s-1}\sum_{k=1}^sA_k\cj{(q_k^*q_k)(\zeta)} \,.
	\end{split}
\end{align}

In view of the representation \eqref{defTk} for $T_k$, we  can write
\begin{align}\label{secondpartofJ}
\sum_{k=0}^{s-2}\frac{T_k(z)}{k!}\left[\frac{\partial^k}{\partial z^k}\cj{Q(\zeta,z)}\right]_{z=0}={}& \frac{z^{s-1}}{(s-2)!}\left[\frac{\partial^{s-2}}{\partial t^{s-2}}\left(\frac{ q^*(t)}{z-t}\cj{Q(\zeta,t)}\right)\right]_{t=0},
\end{align}	
and this latter derivative can be expanded by differentiating the identity  
\begin{align}\label{terminsidesecondpartofJ}
	\frac{q^*(t)}{z-t}\cj{Q(\zeta,t)}={} &\frac{\cj{\zeta}(q^*q)(t)}{(z-t)(1-t\cj{\zeta})^2}+	\frac{(q^*q')(t)}{(z-t)(1-t\cj{\zeta})} +\frac{\sum_{k=1}^sA_k(q^*_kq_k)(t)}{(z-t)(1-t\cj{\zeta})},
\end{align}
which is just a consequence of \eqref{defQ}.

The relations \eqref{terminsidesecondpartofJ}, \eqref{secondpartofJ}, \eqref{firstpartofJ} and \eqref{defJ}  then allow us to write $N(z,\zeta)$ in the form 
\begin{align}\label{representationJ}
	\begin{split}
N(z,\zeta)={}&z^s\cj{(q^*q)(\zeta)}+ (1-z\cj{\zeta})z^{s-1}\cj{(q^*q')(\zeta)}+(1-z\cj{\zeta})(z\cj{\zeta})^{s-1}D(z,\zeta)\\
	&+(1-z\cj{\zeta})z^{s-1}\sum_{k=1}^sA_k\cj{(q_k^*q_k)(\zeta)} \\
	&+\frac{(1-z\cj{\zeta})^2z^{s-1}\cj{\zeta}^{s-1}}{(s-2)!}\sum_{k=1}^sA_k\left[\frac{\partial^{s-2}}{\partial t^{s-2}} 	\left(\frac{(q^*_kq_k)(t)}{(z-t)(1-t\cj{\zeta})}\right)\right]_{t=0}\,,
	\end{split}
\end{align}
where
\[
D(z,\zeta):=	\frac{1}{(s-2)!}\left[\frac{\partial^{s-2}}{\partial t^{s-2}}\left(\frac{(1-z\cj{\zeta})\cj{\zeta}(q^*q)(t)}{(z-t)(1-t\cj{\zeta})^2}+	\frac{(1-z\cj{\zeta})(q^*q')(t)}{(z-t)(1-t\cj{\zeta})}\right)\right]_{t=0}\,.
\]

We now utilize the identity \eqref{InsideD} and the fact that derivatives at the origin of any function $f$ satisfy
\[
f^{(m)}(0)=\frac{1}{m+1}\left.\left(tf(t)\right)^{(m+1)}\right|_{t=0}\,,
\]
to get $D(z,\zeta)$ written in the form
\begin{align}\label{formforD}
	\begin{split}
&\frac{1}{(s-1)!}\left[\frac{\partial^{s-1}}{\partial t^{s-1}}\left(\frac{t\cj{\zeta }(q^*q)(t)}{(z-t)(1-t\cj{\zeta})}-(s-1)\frac{\cj{\zeta}(q^*q)(t)}{1-t\cj{\zeta}}+\frac{t\cj{\zeta}({q^*}'q)(t)}{1-t\cj{\zeta}}\right)\right]_{t=0}\\
&+\frac{1}{(s-2)!}\left[\frac{\partial^{s-2}}{\partial t^{s-2}}\left(\frac{(q^*q')(t)}{z-t}\right)\right]_{t=0}\,.
\end{split}
\end{align}

By rearranging terms in \eqref{formforD} and making use of \eqref{starsandderivatives} we obtain
\begin{align*}D(z,\zeta) ={} & \frac{z\cj{\zeta}}{(s-1)!}\left[\frac{\partial^{s-1}}{\partial t^{s-1}}\left(\frac{(q^*q)(t)}{(z-t)(1-t\cj{\zeta})}\right)\right]_{t=0}\\
	&-\frac{1}{(s-1)!}\left[\frac{\partial^{s-1}}{\partial t^{s-1}}\left(\frac{\cj{\zeta}({q'}^*q)(t)}{1-t\cj{\zeta}}\right)\right]_{t=0}\\
	&+\frac{1}{(s-2)!}\left[\frac{\partial^{s-2}}{\partial t^{s-2}}\left(\frac{(q^*q')(t)}{z-t}\right)\right]_{t=0}\,,
\end{align*}
and after plugging this expression for $D(z,\zeta)$ in \eqref{representationJ}, we see that we can represent $N(z,\zeta)$ as a sum
\begin{align}\label{definitionofJ}
	N(z,\zeta)={}&	N_1(z,\zeta)+	 (1-z\cj{\zeta})N_2(z,\zeta)+ (1-z\cj{\zeta})\sum_{k=1}^sA_kM_k(z,\zeta)\,,
\end{align}
with 
\begin{align}\label{J1def}
N_1(z,\zeta):={}&z^s\cj{(q^*q)(\zeta)}+ \frac{(z\cj{\zeta})^{s}}{(s-1)!}\left[\frac{\partial^{s-1}}{\partial t^{s-1}}\left(\frac{(1-z\cj{\zeta})(q^*q)(t)}{(z-t)(1-t\cj{\zeta})}\right)\right]_{t=0}\,,
\end{align}
\begin{align}\label{defJ2}
	\begin{split}
	N_2(z,\zeta):={}&z^{s-1}\cj{(q^*q')(\zeta)} + \frac{(z\cj{\zeta})^{s-1}}{(s-2)!}\left[\frac{\partial^{s-2}}{\partial t^{s-2}}\left(\frac{(q^*q')(t)}{z-t}\right)\right]_{t=0}\\
	&-\frac{(z\cj{\zeta})^{s-1}}{(s-1)!}\left[\frac{\partial^{s-1}}{\partial t^{s-1}}\left(\frac{\cj{\zeta}({q'}^*q)(t)}{1-t\cj{\zeta}}\right)\right]_{t=0}\,,
	\end{split}
\end{align}
and 
\begin{align*}
	\begin{split}
		M_k(z,\zeta):={}&z^{s-1}\cj{(q_k^*q_k)(\zeta)}+\frac{(z\cj{\zeta})^{s-1}}{(s-2)!}\left[\frac{\partial^{s-2}}{\partial t^{s-2}} 	\left(\frac{(1-z\cj{\zeta})(q^*_kq_k)(t)}{(z-t)(1-t\cj{\zeta})}\right)\right]_{t=0}\,.
	\end{split}
\end{align*}

We next show that 
\begin{align}\label{firstexpressionforJ1}
	N_1(z,\zeta)={} &	q^*(z)\cj{q^*(\zeta)}-(1-z\cj{\zeta})L(z,\zeta)\,,
\end{align}  
with
\begin{align}\label{simplifiedformofL}
	L(z,\zeta):={} &\sum_{r=0}^{s-1}	(z\cj{\zeta})^r\left(\sum_{j=0}^{s-1-r}c_{s-j}\cj{\zeta}^j\right)\left(\sum_{\ell=0}^{s-1-r}\cj{c_{s-\ell}}z^\ell\right)\,.
\end{align} 
Since  $N_1(z,\zeta)$ and $M_k(z,\zeta)$ have the  same structure, this will simultaneously  show that 
\begin{align}\label{expressionforHk}
M_k(z,\zeta)=	q_k^*(z)\cj{q_k^*(\zeta)}-(1-z\cj{\zeta})L_k(z,\zeta),\quad 1\le k\leq s,
\end{align} 
with $L_k$ given by \eqref{Lkdef}.

With the help of the identity 
\[
\frac{1-z\cj{\zeta}}{(z-t)(1-t\cj{\zeta})}=\frac{1}{z-t}-\frac{\cj{\zeta}}{1-t\cj{\zeta}}\,,
\]
the second term in the right-hand side of \eqref{J1def} can be written as  
\begin{align}\label{secondtermJ1}
	\cj{\zeta}^{s}\sum_{\ell=0}^{s-1}\frac{(q^*q)^{(\ell)}(0)}{\ell !}z^{\ell}-	z^{s}\sum_{\ell=0}^{s-1}\frac{(q^*q)^{(\ell)}(0)}{\ell !}\cj{\zeta}^{2s-\ell}.
\end{align}  

To simplify the upcoming presentation, we write
\[
C_\ell:=\frac{(q^*q)^{(\ell)}(0)}{\ell !},\quad 0\leq \ell
\leq 2s\,,
\]
so that by \eqref{newidentity5},
\[
\cj{C_\ell}=\frac{(q^*q)^{(2s-\ell)}(0)}{(2s-\ell) !},\quad 0\leq \ell\leq 2s,
\]
and by \eqref{newidentity4},
\[
C_\ell= \sum_{k=0}^{\ell}c_{k}\cj{c_{s-\ell+k}}=\sum_{k=0}^{\ell}\cj{c_{s-k}}c_{\ell-k}\,, \quad 0\leq \ell\leq s.
\]
Note that $
	C_s=\sum_{j=0}^{s}|c_j|^2\geq 1
$.

Since the right-hand side  of \eqref{J1def} is equal to the sum of \eqref{firsttermJ1} and \eqref{secondtermJ1}, we get 
\begin{align}\label{secondexpressionforJ1}
	N_1(z,\zeta)  	&={} C_s(z\cj{\zeta})^s+\cj{\zeta}^{s}\sum_{\ell=0}^{s-1}C_{\ell}z^{\ell}+	z^{s}\sum_{\ell=0}^{s}\cj{C_\ell}\,\cj{\zeta}^{\ell}\,.
\end{align}  

Let us now define, for every integer $n\geq 0$, 
\[
V_{n}(t):=\frac{t^{n+1}-1}{t-1}=\sum_{j=0}^{n}t^{j},
\]
and let us also agree that $V_{-1}\equiv 0$. With the help of these polynomials, we can then write \eqref{secondexpressionforJ1} as
\begin{align}\label{thirdexpressionforJ1}
	N_1(z,\zeta)={}&(z\cj{\zeta}-1)\left(C_{s}V_{s-1}(z\cj{\zeta})+\sum_{\ell=0}^{s-1}C_{\ell}\cj{\zeta}^{s-\ell}V_{\ell-1}(z\cj{\zeta})+\sum_{\ell=0}^{s-1}\cj{C_{\ell}}z^{s-\ell}V_{\ell-1}(z\cj{\zeta})\right)\nonumber\\
&+C_s+\sum_{\ell=0}^{s-1}C_{\ell}\cj{\zeta}^{s-\ell}+ \sum_{\ell=0}^{s-1}\cj{C_{\ell}}z^{s-\ell}\,.
\end{align}  

Letting  $\ell=j-k-1$ be the new counter, one verifies the equalities
\begin{align}\label{terms1}
\sum_{\ell=0}^{s-1}C_{\ell}\cj{\zeta}^{s-\ell}=	\sum_{0\leq k<j\leq s}\cj{c_{s-k}}c_{s-j}\cj{\zeta}^{j-k}\,,
\end{align}  
and
\begin{align}\label{terms2} 
\sum_{\ell=0}^{s-1}C_{\ell}\cj{\zeta}^{s-\ell}V_{\ell-1}(z\cj{\zeta})=	\sum_{0\leq k<j\leq s}\cj{c_{s-k}}c_{s-j}\cj{\zeta}^{j-k}V_{s-1-(j-k)}(z\cj{\zeta})\,.
\end{align}  

After expanding the product 
\[
q^*(z)\cj{q^*(\zeta)}=\left(\sum_{u=0}^s\cj{c_{s-u}}z^u\right)\left(\sum_{j=0}^sc_{s-j}\cj{\zeta}^j\right),
\]
we can  express it by means of the polynomials $V_n$ as
\begin{align}\label{productofq*}\begin{split}
 q^*(z)\cj{q^*(\zeta)}={}&(z\cj{\zeta}-1)\sum_{j=0}^{s}|c_{s-j}|^2V_{j-1}(z\cj{\zeta})   \\
	&+(z\cj{\zeta}-1)\sum_{0\leq k<j\leq s}\left(c_{s-k}\cj{c_{s-j}}z^{j-k}+\cj{c_{s-k}}c_{s-j}\cj{\zeta}^{j-k}\right)V_{k-1}(z\cj{\zeta}) \\
	&+C_s+\sum_{0\leq k<j\leq s}\cj{c_{s-k}}c_{s-j}\cj{\zeta}^{j-k}+\sum_{0\leq k<j\leq s}c_{s-k}\cj{c_{s-j}}z^{j-k}\,.
	\end{split}
\end{align}  

Four terms in \eqref{thirdexpressionforJ1} can be rewritten using \eqref{terms1} and \eqref{terms2}, so that subtracting \eqref{productofq*} from \eqref{thirdexpressionforJ1} yields
\begin{align*}
	N_1(z,\zeta)	={} &	q^*(z)\cj{q^*(\zeta)}+(z\cj{\zeta}-1)\sum_{j=0}^{s-1}|c_{s-j}|^2(z\cj{\zeta})^{j}V_{s-1-j}(z\cj{\zeta})\\
	&+(z\cj{\zeta}-1)\sum_{0\leq k<j\leq s-1}\cj{c_{s-k}}c_{s-j}z^k\cj{\zeta}^{j}V_{s-1-j}(z\cj{\zeta})\\
	&+(z\cj{\zeta}-1)\sum_{0\leq k<j\leq s-1}c_{s-k}\cj{c_{s-j}}z^{j}\cj{\zeta}^{k}V_{s-1-j}(z\cj{\zeta})\,.
\end{align*}  

If we now write these sums in the form 
\begin{align*}
\sum_{j=0}^{s-1}|c_{s-j}|^2(z\cj{\zeta})^{j}V_{s-1-j}(z\cj{\zeta}) ={} &\sum_{j=0}^{s-1}\left(\sum_{\ell=s-j}^{s}|c_\ell|^2 \right)(z\cj{\zeta})^{j}\,,
\end{align*}  
and
\begin{align*}
\sum_{0\leq k<j\leq s-1}\cj{c_{s-k}}c_{s-j}z^k\cj{\zeta}^{j}V_{s-1-j}(z\cj{\zeta})={}&\sum_{0\leq k<j\leq s-1}z^k\cj{\zeta}^{j}\sum_{\ell=0}^{k}\cj{c_{s-k+\ell}}c_{s-j+\ell}\,,
\end{align*}  
then we see that $N_1(z,\zeta)$ can be written as in \eqref{firstexpressionforJ1} with 
\begin{align}\label{secondexpressionforL}
	\begin{split}
	L(z,\zeta)={} &	\sum_{j=0}^{s-1}\left(\sum_{\ell=s-j}^{s}|c_\ell|^2\right)(z\cj{\zeta})^{j}+\sum_{0\leq k<j\leq s-1}\left(\sum_{\ell=0}^{k}\cj{c_{s-k+\ell}}c_{s-j+\ell}\right)z^k\cj{\zeta}^j\\
	&+\sum_{0\leq k<j\leq s-1}\left(\sum_{\ell=0 }^{k}c_{s-k+\ell}\cj{c_{s-j+\ell}}\right)z^j\cj{\zeta}^k\,.
	\end{split}
\end{align}  

To verify that \eqref{secondexpressionforL} coincides with \eqref{simplifiedformofL}, we group the terms corresponding to the lowest values of $\ell$ to get
\begin{align}\label{iterativeformulaforL}
	\begin{split}
	L(z,\zeta)={} &	\left(\sum_{j=0}^{s-1}c_{s-j}\cj{\zeta}^j\right)\left(\sum_{\ell=0}^{s-1}\cj{c_{s-\ell}}z^\ell\right)+ (z\cj{\zeta})\sum_{j=0}^{s-2}\left(\sum_{\ell=s-j}^{s}|c_\ell|^2\right)(z\cj{\zeta})^{j}\\
	&+(z\cj{\zeta})\sum_{0\leq k<j\leq s-2}\left(\sum_{\ell=0}^{k}\cj{c_{s-k+\ell}}c_{s-j+\ell}\right)z^k\cj{\zeta}^j\\
	&+(z\cj{\zeta})\sum_{0\leq k<j\leq s-2}\left(\sum_{\ell=0 }^{k}c_{s-k+\ell}\cj{c_{s-j+\ell}}\right)z^j\cj{\zeta}^k\,.
	\end{split}
\end{align} 
The sum of the last three terms equals the product of $(z\cj{\zeta})$ and a second factor, which is just the sum in \eqref{secondexpressionforL} with the terms corresponding to $j=s-1$ removed. This decomposition can then be iterated to produce \eqref{simplifiedformofL}.

We will also need a further decomposition of $L(z,\zeta)$, namely, writing \eqref{secondexpressionforL} as 
\begin{align}\label{decomposedL}
	\begin{split}
	L(z,\zeta)={} &(z\cj{\zeta}-1)\sum_{j=0}^{s-1}\left(\sum_{\ell=s-j}^{s}|c_\ell|^2\right)V_{j-1}(z\cj{\zeta})\\
	&+(z\cj{\zeta}-1)\sum_{0\leq k<j\leq s-1}\left(\sum_{\ell=0}^{k}\cj{c_{s-k+\ell}}c_{s-j+\ell}\right)\cj{\zeta}^{j-k}V_{k-1}(z\cj{\zeta})\\
	&+(z\cj{\zeta}-1)\sum_{0\leq k<j\leq s-1}\left(\sum_{\ell=0 }^{k}c_{s-k+\ell}\cj{c_{s-j+\ell}}\right)z^{j-k}V_{k-1}(z\cj{\zeta})\\
	&+\sum_{j=0}^{s-1}\left(\sum_{\ell=s-j}^{s}|c_\ell|^2\right)+\sum_{0\leq k<j\leq s-1}\left(\sum_{\ell=0}^{k}\cj{c_{s-k+\ell}}c_{s-j+\ell}\right)\cj{\zeta}^{j-k}\\
	&+\sum_{0\leq k<j\leq s-1}\left(\sum_{\ell=0 }^{k}c_{s-k+\ell}\cj{c_{s-j+\ell}}\right)z^{j-k}\,.
	\end{split}
\end{align}  

 We now focus on the quantity $N_2(z,\zeta)$ defined in \eqref{defJ2}. With the aid of the relations \eqref{newidentity1} and \eqref{newidentity2}, we can expand the derivatives in \eqref{defJ2} to get
\begin{align}\label{secondexpressionforJ2}
	N_2(z,\zeta)	={} &\cj{\zeta}^{s-1}\sum_{\ell=0}^{s-2}\frac{(q^*q')^{(\ell)}(0)}{\ell !}z^{\ell}+	z^{s-1}\sum_{\ell=0}^{s-1}\frac{\cj{(q^*q')^{(\ell)}(0)}}{\ell !}\cj{\zeta}^{\ell}\,,
\end{align}
and by \eqref{newidentity3}, we can then write \eqref{secondexpressionforJ2} as
\begin{align}\label{thirdexpressionforJ2}
	\begin{split}
	N_2(z,\zeta)={} &(z\cj{\zeta}-1)\sum_{\ell=1}^{s}\ell|c_\ell|^2V_{s-2}(z\cj{\zeta})	+\sum_{\ell=1}^{s}\ell|c_\ell|^2\\
	&+(z\cj{\zeta}-1)\sum_{\ell=0}^{s-2}z^{s-1-\ell}V_{\ell-1}(z\cj{\zeta})\sum_{j=1}^{\ell+1}j\cj{c_{j}}c_{s-1-\ell+j}\\
		&+(z\cj{\zeta}-1)\sum_{\ell=0}^{s-2}\cj{\zeta}^{s-1-\ell}V_{\ell-1}(z\cj{\zeta})\sum_{j=1}^{\ell+1}jc_{j}\cj{c_{s-1-\ell+j}}\\
	&+		\sum_{\ell=0}^{s-2}\cj{\zeta}^{s-1-\ell}\sum_{j=1}^{\ell+1}jc_{j}\cj{c_{s-1-\ell+j} }	+\sum_{\ell=0}^{s-2}z^{s-1-\ell}\sum_{j=1}^{\ell+1}j\cj{c_{j}}c_{s-1-\ell+j} \,.
	\end{split}
\end{align}

If we now rewrite the last five summands  of \eqref{thirdexpressionforJ2} in the form 
\[
\sum_{\ell=1}^{s}\ell|c_\ell|^2=\sum_{k=0}^{s-1}\sum_{\ell=s-k}^{s}|c_\ell|^2,
\]
\begin{align*}
\sum_{u=0}^{s-2}\cj{\zeta}^{s-1-u}\sum_{j=1}^{u+1}jc_{j}\cj{c_{s-1-u+j}}={}&	\sum_{0\leq k<j\leq s-1}\left(\sum_{\ell=0}^{k}\cj{c_{s-k+\ell}}c_{s-j+\ell}\right)\cj{\zeta}^{j-k},
\end{align*} 
and 
\begin{align*}
&\sum_{u=0}^{s-2} \cj{\zeta}^{s-1-u}V_{u-1}(z\cj{\zeta})\sum_{j=1}^{u+1}jc_{j}\cj{c_{s-1-u+j}}\\
&\quad 	=	\sum_{0\leq k<j\leq s-1}\left(\sum_{\ell=0}^{k}\cj{c_{s-k+\ell}}c_{s-j+\ell}\right)V_{s-2-j+k}(z\cj{\zeta})\cj{\zeta}^{j-k},
\end{align*}
and then subtract \eqref{decomposedL} from \eqref{thirdexpressionforJ2}, we obtain 
\begin{align}\label{fourthexpresionforJ2}
	\begin{split}
\frac{N_2(z,\zeta)-	L(z,\zeta)}{(z\cj{\zeta}-1)}={} &\sum_{k=0}^{s-2}(z\cj{\zeta})^{k}V_{s-2-k}(z\cj{\zeta})\left(\sum_{\ell=s-k}^{s}|c_\ell|^2\right)\\
	&+	\sum_{0\leq k<j\leq s-2}\left(\sum_{\ell=0}^{k}\cj{c_{s-k+\ell}}c_{s-j+\ell}\right)V_{s-2-j}(z\cj{\zeta})z^{k}\cj{\zeta}^{j}\\	
	&+	\sum_{0\leq k<j\leq s-2}\left(\sum_{\ell=0}^{k}c_{s-k+\ell}\cj{c_{s-j+\ell}}\right)V_{s-2-j}(z\cj{\zeta})z^{j}\cj{\zeta}^{k}\,.
	\end{split}
\end{align}

We then replace the terms in the right-hand side of \eqref{fourthexpresionforJ2} with the equivalent expressions 
\begin{align*}
\sum_{k=0}^{s-2}(z\cj{\zeta})^{k}V_{s-2-k}(z\cj{\zeta})\left(\sum_{\ell=s-k}^{s}|c_\ell|^2\right)={} &\sum_{j=0}^{s-2}(z\cj{\zeta})^{j}\sum_{\ell=0}^{j}(\ell+1)|c_{s-j+\ell}|^2\,,
\end{align*}
\begin{align*}
&\sum_{0\leq k<j\leq s-2}\left(\sum_{\ell=0}^{k}\cj{c_{s-k+\ell}}c_{s-j+\ell}\right)V_{s-2-j}(z\cj{\zeta})z^{k}\cj{\zeta}^{j}\\
&\quad={}\sum_{0\leq k<j\leq s-2}z^{k}\cj{\zeta}^{j}\sum_{\ell=0}^{k}(\ell+1)\cj{c_{s-k+\ell}}c_{s-j+\ell},
\end{align*}
and after doing that, we group the terms corresponding to $\ell=0$ to obtain
\begin{align}\label{iterativeforJ_2}
	\begin{split}
\frac{N_2(z,\zeta)-	L(z,\zeta)}{(z\cj{\zeta}-1)}	={} &\left(\sum_{j=0}^{s-2}c_{s-j}\cj{\zeta}^{j}\right)\left(\sum_{k=0}^{s-2}\cj{c_{s-k}}z^{k}\right)\\
&+(z\cj{\zeta})\sum_{j=0}^{s-3}(z\cj{\zeta})^{j}\sum_{\ell=0}^{j}(\ell+2)|c_{s-j+\ell}|^2\\
		&+(z\cj{\zeta})\sum_{0\leq k<j\leq s-3}z^{k}\cj{\zeta}^{j}\sum_{\ell=0}^{k}(\ell+2)\cj{c_{s-k+\ell}}c_{s-j+\ell}\\
	&+(z\cj{\zeta})\sum_{0\leq k<j\leq s-3}z^{j-1}\cj{\zeta}^{k-1}\sum_{\ell=0}^{k}(\ell+2)c_{s-k+\ell}\cj{c_{s-j+\ell}}\,.
\end{split}
\end{align}

By the same argument we used to derive \eqref{simplifiedformofL} from \eqref{secondexpressionforL}, we can iterate \eqref{iterativeforJ_2} to finally get 
\begin{align*}
N_2(z,\zeta)-	L(z,\zeta)={} &(z\cj{\zeta}-1)	\sum_{r=0}^{s-2}(r+1)(z\cj{\zeta})^r\left(\sum_{j=0}^{s-2-r}c_{s-j}\cj{\zeta}^{j}\right)\left(\sum_{k=0}^{s-2-r}\cj{c_{s-k}}z^{k}\right).
\end{align*}

Inserting \eqref{firstexpressionforJ1}, \eqref{expressionforHk}, and the latter equality into \eqref{definitionofJ} (recall \eqref{Tkdef}) yields 
\begin{align*}
	N(z,\zeta)={}&	q^*(z)\cj{q^*(\zeta)}- (1-z\cj{\zeta})^2\sum_{r=0}^{s-2}(r+1)T_r(z)\cj{T_r(\zeta)}\\
	&+ (1-z\cj{\zeta})\sum_{k=1}^sA_kq_k^*(z)\cj{q_k^*(\zeta)} -(1-z\cj{\zeta})^2\sum_{k=1}^sA_kL_k(z,\zeta).
\end{align*}
In view of \eqref{KintermsofJ}, this  establishes Theorem \ref{maintheorem}.

\subsection{Proof of Formula  \eqref{GSformula}} \label{miscellaneousproofs}
The proof is basically the same given in \cite{MGS}, the argument simply works equally well   for a product of different powers of Blaschke factors. For every $f$ in the disk algebra, a residue calculation gives (this is (2) in \cite{MGS})  
	\begin{align}\label{firstrepresentation}
		\begin{split}
			\frac{1}{2\pi i}\int_{|\zeta|=1}\frac{B(z)}{B(\zeta)}\frac{f(\zeta)}{\zeta-z}d\zeta ={} & f(z)-B(z)\sum_{k=1}^s\frac{1-|a_k|^2}{B_k(a_k)}\frac{f(a_k)}{z-a_k}.
		\end{split}
	\end{align}

Letting $w$ be as in \eqref{defw}, and since $h\equiv 1$ on the unit circle, we can apply  Green's Formula to obtain  
	\begin{align}\label{secondrepresentation}
		\begin{split}
			&	\frac{1}{2\pi i}\int_{|\zeta|=1}\frac{B(z)}{B(\zeta)}\frac{f(\zeta)}{\zeta-z}d\zeta=	\frac{B(z)}{2\pi i}\int_{|\zeta|=1}\frac{\cj{\zeta }}{1-\cj{\zeta}z}\cj{B(\zeta)w(\zeta)}f(\zeta)w(\zeta)d\zeta\\
			&=\int_{\D}\left(\frac{B(z)\cj{B(\zeta)}}{(1-\cj{\zeta}z)^2}+\frac{B(z)\cj{\zeta }}{1-\cj{\zeta}z}\sum_{k=1}^s\frac{(\frac{p_k}{2}+1)(1-|a_k|^2)\cj{B_k(\zeta)}}{(1-a_k\cj{\zeta})^2}\right)f(\zeta)h(\zeta)d\sigma(\zeta)
		\end{split}
	\end{align}

Writing each $f(a_k)$ in \eqref{firstrepresentation} as the integral of $f(\zeta)$ against $K(a_k,\zeta)$ and replacing the left-hand side of \eqref{firstrepresentation} by the last integral in \eqref{secondrepresentation}, we obtain a representation for $f(z)$ as an integral over $\D$ with respect to $h(\zeta)d\sigma(\zeta)$, whose integrand is the product of $f(\zeta)$ and the function 
	\begin{align*}
		\begin{split}
		&\frac{B(z)\cj{B(\zeta)}}{(1-\cj{\zeta}z)^2}+\frac{B(z)\cj{\zeta }}{1-\cj{\zeta}z}\sum_{k=1}^s\frac{(\frac{p_k}{2}+1)(1-|a_k|^2)\cj{B_k(\zeta)}}{(1-a_k\cj{\zeta})^2}\\
			&+\sum_{k=1}^s\frac{(1-|a_k|^2)B_k(z)}{(1-\cj{a}_kz)B_k(a_k)}K(a_k,\zeta).
		\end{split}
	\end{align*}
Since this function is analytic in $z$ and anti-analytic in $\zeta$, it must coincide with $K(z,\zeta)$, and from this expression for the kernel we can continue as in \cite{MGS} to derive \eqref{GSformula}.

\section{Declarations}

\paragraph{Ethical approval:} Not applicable.

\paragraph{Competing interests:} The author has no relevant financial or non-financial interests to disclose. The author has no conflicts of interest to declare that are relevant to the content of this article. The only organization with an interest in the research accomplished in this manuscript is the author's employer, The University of Mississippi. 

\paragraph{Authors' contributions:} Not applicable.

\paragraph{Funding:} The author did not receive funding from any organization for the submitted work.

\paragraph{Availability of data and materials:}  Not applicable.

\end{document}